\newtheorem{theorem}{Theorem}[section]
\newtheorem{lemma}[theorem]{Lemma}
\theoremstyle{definition}
\newtheorem{definition}[theorem]{Definition}
\theoremstyle{proposition}
\newtheorem{proposition}[theorem]{Proposition}
\theoremstyle{remark}
\newtheorem{remark}[theorem]{Remark}
\theoremstyle{corollary}
\newtheorem{corollary}[theorem]{Corollary}
\numberwithin{equation}{section}
\begin{document}
	
	\title[Approximation with Multivariate Fractal Functions]{Multivariate fractal interpolation functions: Some approximation aspects 
		and an associated fractal interpolation operator}

	%    Information for first author
	
	%    Address of record for the research reported here
	%\address{Department of Mathematics, IIT Delhi, New Delhi, India 110016}
	%    Current address
	
	%\email{viswa@maths.iitd.ac.in}
	%    \thanks will become a 1st page footnote.
	%\thanks{The first author was supported in part by NSF Grant \#000000.}
	
	%    Information for second author
	
	\author{K. K. Pandey}
	%    Address of record for the research reported here
	\address{Department of Mathematics, IIT Delhi, New Delhi, India 110016}
	%    Current address
	
	\email{kshitij.sxcr@gmail.com}
	
	%\thanks{The author was supported in part by UGC.}
	
	\author{P. Viswanathan}
	%    Address of record for the research reported here
	\address{Department of Mathematics, IIT Delhi, New Delhi, India 110016}
	%    Current address
	
	\email{viswa@maths.iitd.ac.in}
	
	%    General info
	%\subjclass[2000]{Primary 54C40, 14E20; Secondary 46E25, 20C20}
	
	%\date{January 1, 2001 and, in revised form, June 22, 2001.}
	
	%\dedicatory{This paper is dedicated to our advisors.}
	
	\keywords{Multivariate fractal approximation, Constrained approximation, Fractal operator, Nonlinear operator, Schauder basis, M\"{u}ntz theorem}
	
	\begin{abstract}
		The natural kinship between  classical theories of interpolation and approximation  is well explored. In contrast to this,   the interrelation between interpolation and approximation is subtle and this duality is relatively obscure in the context of fractal interpolation. The notion of $\alpha$-fractal function provides a proper foundation for the approximation theoretic facet of univariate fractal interpolation functions (FIFs).  However, no comparable approximation theoretic aspects of FIFs has been developed for functions of several variables. The current article intends to open the door for intriguing interaction between approximation theory and multivariate FIFs. To this end, in the first part of this article we develop a general framework to construct multivariate FIF, which is amenable to  provide a multivariate analogue of the $\alpha$-fractal function. Multivariate $\alpha$-fractal functions provide  a parameterized family of fractal approximants  associated to  a given multivariate continuous function. Some elementary aspects of the  multivariate  fractal nonlinear (not necessarily linear) interpolation operator that sends a continuous function defined on a hyper-rectangle to its fractal analogue is studied. 
	\end{abstract}
	
	\maketitle

	%%%%%%%%%%%%%%%%%%%%%%%%%%%%%%%%%%%%%%%%%%%%%%%%%%%%%%%%%%%%%%%%%%%%%%%%
	
	%%%%%%%%%%%%%%%%%%%%%%%%%%%%%%%%%%%%%%%%%%%%%%%%%%%%%%%%%%%%%%%%%%%%%%%%
	.
	
	\section{INTRODUCTION}
	The first systematic study on interpolation of univariate data with continuous functions whose graphs are self-referential sets (fractals) -  popularly known as fractal interpolation - has its origin in Barnsley's fundamental paper on fractal functions and interpolation \cite{MF1}. During the past three decades, various questions concerning the fractal interpolation have led to numerous generalizations to the original setting by Barnsley; for a lucid exposition we refer the reader to the book \cite{PMbook}.   One of the features of fractal interpolation that distinguishes it from various standard interpolation techniques is its ability to produce both smooth and nonsmooth interpolants. It is worth noting that there are only a very few methods that produce nonsmooth interpolating functions, another popular method being subdivision schemes \cite{DL}. Recently, attempts have been made to link subdivision schemes and fractal interpolation \cite{DLM, LDV}. The Hausdorff dimension of the graph of a fractal interpolant provides an additional index to measure the complexity of a signal, for instance, EEG signal \cite{DMIV}. Further, the smooth fractal interpolation supplements and subsumes the theory of splines and Hermite interpolation \cite{Chand2,PM,NS}.

	\par Classical theories of interpolation and approximation often appear as two sides of a same coin, the result about the one frequently imply the results about the other.  Indeed, at a basic level both are, in essence, one and the same.  This duality between interpolation and approximation seems to be more subtle in the fractal setting. Fruitful interactions between the notion of univariate FIF and classical approximation theory took place via a suitable subclass of fractal interpolation function. This subclass was brought out by Barnsley himself; see, for instance, \cite{MF1}, especially his remarks in Example 2 on p. 309 there. The aforementioned  subclass of FIFs  was later named as $\alpha$-fractal functions to reflect the vectorial parameter $\alpha$ that influences the Hausdorff dimension of the graph of a FIF. Substantial extensions of this theme have been carried out by Navascue\'{e}s and her coworkers \cite{M1,M2,M3,PV2}. It is our opinion that the concept of $\alpha$-fractal functions assisted fractal interpolation to find applications in other fields of mathematics, that belong, in broad sense, to the topic of approximation of functions in various function classes, for instance, in the theory of bases and frames \cite{N1,N2}.  Another interesting theoretical and practical ramification of these findings is the fact that the fractal functions can be used for constrained approximation \cite{VCN}. Recent years have witnessed a renewed level of interest in fractal interpolation, in particular, in the study of $\alpha$-fractal functions.

	\par 
	Parallel with, or perhaps even prior to,  the investigations on approximation theoretic aspects of univariate FIF through the notion of $\alpha$-fractal functions,  attempts have been made to study  multivariate analogues of the fractal interpolation, especially the bivariate FIFs or fractal surfaces. The study of multivariate FIFs, even the bivariate case, is more difficult and approaches are less obvious; see, for instance, \cite{PB,Chand,Dalla,Feng,Mal,PM,Metz,Xie,Zhao}.
	In most cases, the construction is confined to the case wherein some suitable restriction on interpolation points is imposed or maps in the IFS use equal scaling factors. 
	In \cite{Ruan} the author gives a more general framework to construct the bivariate fractal interpolation functions for data on the rectangular grids. Our interest in \cite{Ruan} is attributed to the fact that the formalism therein can be easily adapted to obtain the bivariate $\alpha$-fractal function, an interlude to the study of approximation theoretic aspects of bivariate fractal interpolation; see the recent works reported in \cite{Verma1,Verma2}.

	\par In contrast to the univariate and bivariate theory of FIFs, higher dimensional analogues are scarce in the literature. Hardin and Massopust \cite{HM,PMbook} constructed fractal interpolation functions from a polygonal $D \subset \mathbb{R}^n$ to $\mathbb{R}^m$ using suitable triangulations of $D$. Construction of FIFs on arbitrary interpolation points placed on rectangular grids of $\mathbb{R}^n$ is undertaken in \cite{PB1}. Both these constructions are based on the concept of recurrent IFSs \cite{MFBook, PMbook} and ensuring continuity of such multivariate fractal functions presents geometric complications beyond those which arise for similar univariate fractal functions or univariate vector-valued fractal functions.

	\par Principle aim of this article is to initiate an interaction between multivariate FIFs and multivariate approximation theory and thereby expose some interesting approximation theoretic considerations of multivariate FIFs. Having gained some experience with fractal approximation theory of univariate and bivariate functions, one could easily anticipate that the development of multivariate analogue of $\alpha$-fractal function could be the first and foremost step to accomplish this. However, the impediment is that a general framework to construct  multivariate FIFs that is appropriate to provide  the notion of multivariate $\alpha$-fractal function is unavailable. The  constructions of multivariate FIFs hinted at the previous paragraph  do not seem to lend themselves  the $\alpha$-fractal function formalism of the multivariate FIFs. In the first part of this article we overcome the aforementioned obstacle by developing a general  framework for the construction of  multivariate FIFs. The ``computational cost" for ensuring continuity of the interpolant in this construction is  similar to that in the construction of the univariate FIFs proposed by Barnsley. The corresponding problem for bivariate case was treated in \cite{Ruan} to which work the first part of the article  may be considered a sequel.  However, our interest is in the multivariate analogue of $\alpha$-fractal function because  it provides a vehicle to interact with approximation theory. Our theory has been designed to establish a rigorous definition of the multivariate $\alpha$-fractal function.

	\section{Multivariate fractal interpolation functions}
	Let $n \ge 2$ be a natural number. Consider a data set  $$\Delta = \big\{(x_{1,i_1},x_{2,i_2},\ldots,x_{n,i_n},y_{i_{1}i_{2}\ldots i_{n}}): i_{k}=0,1,\dots,N_{k}; k=1,\dots,n\big\}$$  such that $$a_k=x_{k,0}< x_{k,1} < \dots <x_{k,N_{k}}=b_k$$ for each $k = 1,2,\ldots,n;~n\geq 2.$ For $k = 1,2,\ldots n$, set $I_{k}=[a_k,b_k]$. 
	To simplify the notation, for $m\in\mathbb{N},$ we write
	\begin{equation*}
		\begin{split}
			& \Sigma_{m}=\{1,2,\dots,m\},~~ \Sigma_{m,0}=~\{0,1,\dots m \},\\ & \partial \Sigma_{m,0}=~\{0,m\},~~ \text{int} \Sigma_{m,0}=~\{1,2,\dots,m-1\}.
		\end{split}
	\end{equation*} 
	We shall denote by $I_{k,i_k}$, the typical subinterval of $I_k$ determined by the partition $\{x_{k,0}, x_{k,1}, \dots, x_{k,N_{k}} \}$,
	$I_{k,i_k} =[x_{k,i_k-1},x_{k,i_k}]$ for $i_k \in \Sigma_{N_{k}}.$ For any $i_k \in \Sigma_{N_k},$ let $u_{k,i_k}:I_{k} \rightarrow I_{k,i_k}$ be an affine map satisfying 
	
	\begin{equation}\label{constraintu1}
		\begin{cases}
			\begin{aligned}
				u_{k,i_k}(x_{k,0})=x_{k,i_k -1}~\text{and}~  u_{k,i_k}(x_{k,N_k})=x_{k,i_k}, ~~ \text{if}~ i_k~ \text{is~ odd},\\
				u_{k,i_k}(x_{k,0})=x_{k,i_k}~\text{and}~ u_{k,i_k}(x_{k,N_k})=x_{k,i_k -1}, ~~ \text{if}~ i_k~ \text{is~ even},
			\end{aligned} 
		\end{cases}
	\end{equation}
	
	%   \begin{equation}\label{constraintu2}
	%   \begin{aligned}
	% u_{k,i_k}(x_{k,0})=x_{k,i_k},  u_{k,i_k}(x_{k,N_k})=x_{k,i_k -1}, ~~ if~ i_k~ is~ even,
	%  \end{aligned} 
	% \end{equation}
	
	\begin{equation}\label{constraintu3}
		\begin{aligned}
			\big|u_{k,i_k}(x)-u_{k,i_k}(x')\big| \le \alpha_{k,i_k}|x -x'|, ~~~~ \forall ~~x, x' \in I_k,
		\end{aligned} 
	\end{equation}
	where $0 \leq \alpha_{k,i_k }< 1$ is a constant. Using the definition of $u_{k,i_k}$, it is easy to check that
	\begin{equation}\label{constraintu4}
		\begin{aligned}
			u_{k,i_k}^{-1}(x_{k,i_k})=u_{k,i_k+1}^{-1}(x_{k,i_k}),~~ \forall ~~i_k \in \text{int}\Sigma_{N_k,0}.
		\end{aligned}                
	\end{equation}
	Let $\tau: \mathbb{Z}\times \{0,N_1,N_2,\ldots,N_n\} \rightarrow \mathbb{Z}$ be defined by 
	\begin{equation}
		\begin{cases}
			\begin{aligned}
				&\tau(i,0)= i-1~~ \text{and} ~~\tau(i,N_k)= i, ~~ \text{if}~~ i~~ \text{is odd},\\ 
				&\tau(i,0)= i,~~\text{and}~~ \tau(i,N_k)= i-1, ~~\text{ if}~~ i~~ \text{is even}.
			\end{aligned} 
		\end{cases}
	\end{equation}
	Using the above notation we see that $$ u_{k,i_k}(x_{k,j_k}) = x_{k,\tau(i_k,j_k)},~~ \forall~~ i_k \in \Sigma_{N_{k}}, j_k \in \partial\Sigma_{N_{k},0},~~k \in \Sigma_{n}.$$               
	Let $K := \Big(\prod_{k=1}^{n}I_k\Big)\times\mathbb{R}.$ For each $(i_1,i_2,\ldots, i_n) \in \prod_{k=1}^{n}\Sigma_{N_{k}},$ let $F_{i_1 i_2 \ldots i_n}:K \rightarrow \mathbb{R}$ be a continuous function satisfying the following conditions. 
	\begin{equation}\label{constraintF}
		\begin{aligned}
			F_{i_1 i_2 \ldots i_n}(x_{1,j_1},x_{2,j_2},\ldots, x_{n,j_n},y_{j_1 j_2 \ldots j_n}) = y_{\tau(i_1,j_1)\tau(i_2,j_2)\ldots\tau(i_n,j_n)}, 
		\end{aligned}
	\end{equation}
	$\text{for all}~~ (j_1, j_2, \ldots, j_n) \in \prod_{k=1}^{n}\partial\Sigma_{N_{k},0}$ and 
	\begin{equation}\label{constraintF1}
		\begin{aligned}
			\big |F_{i_1 i_2 \ldots i_n}(x_1,x_2,\ldots, x_n,y) - F_{i_1 i_2 \ldots i_n}(x_1,x_2,\ldots,x_n,y')\big| &\le \gamma_{i_1 i_2 \ldots i_n} |y - y'|,\\ 
		\end{aligned}
	\end{equation}
	$\text{for all}~~ (x_1,x_2,\ldots,x_n) \in \prod_{k=1}^{n}I_k~~ \text{and}~~ y,y' \in \mathbb{R},$ where $0 \leq \gamma_{i_1 i_2 \ldots i_n}< 1$ is a constant.\\
	Finally, for each $(i_1,i_2,\ldots, i_n) \in \prod_{k=1}^{n}\Sigma_{N_{k}},$ we define $W_{i_1 i_2 \ldots i_n}:K \rightarrow K$ by 
	\begin{equation}\label{IFS}
		\begin{aligned}
			W_{i_1 i_2 \ldots i_n}(x_1,x_2,\ldots,x_n,y)= &~\big(u_{1,i_1}(x_1),u_{2,i_2}(x_2),\ldots,u_{n,i_n}(x_n),\\& F_{i_1 i_2 \ldots i_n}(x_1,x_2,\ldots,x_n,y)\big).
		\end{aligned}
	\end{equation}
	and consider the Iterated Function System (IFS) $$\Big\{K, W_{i_1 i_2 \ldots i_n} : (i_1, i_2, \ldots i_n) \in \prod_{k=1}^{n}\Sigma_{N_{k}} \Big\}.$$ 
	For the definition of IFS and its role in the theory of univariate fractal interpolation function, the interested reader may consult \cite{MF1}. The following theorem is a multivariate analogue of the construction of univariate FIFs originally appeared in \cite{MF1} and its bivariate extension studied in \cite{Ruan}. We provide an expanded rendition of the arguments for the sake of expositions and record. 
	\begin{theorem}\label{FIF}
		Let $\Delta = \big\{(x_{1,i_1},x_{2,i_2},\ldots,x_{n,i_n},y_{i_{1}i_{2}\ldots i_{n}}): i_{k}=0,1,\dots,N_{k}; k=1,\dots,n\big\}$ be a prescribed multivariate data set and $\{K,  W_{i_1 i_2 \ldots i_n} : (i_1, i_2, \ldots, i_n) \in \prod_{k= 1}^{n}\Sigma_{N_{k}} \}$ be the IFS associated to it, as  defined above. Assume that for each $(i_1,i_2,\ldots, i_n) \in \prod_{k=1}^{n}\Sigma_{N_{k}} $, the map  $F_{i_1 i_2 \ldots i_n}$ satisfy the following matching conditions:\\
		For all $ i_k \in \text{int}\Sigma_{N_k,0}$, $ 1\leq k\leq n$, $(i_1,i_2,\ldots, i_n) \in \prod_{k=1}^{n}\Sigma_{N_{k}}$ and $x_k^*=u_{k,i_k}^{-1}(x_{k,i_k})=u_{k,i_k+1}^{-1}(x_{k,i_k}),$
		\begin{equation}
			\begin{aligned}\label{matching}
				&~ F_{i_1\ldots i_k \ldots i_n}(x_1,\ldots,x_{k-1}, x_k^*, x_{k+1},\ldots x_n,y) \\ &= F_{i_1\ldots i_k+1 \ldots i_n}(x_1,\ldots, x_{k-1}, x_k^*, x_{k+1}\ldots x_n,y),
			\end{aligned}  
		\end{equation}
		where $ (x_1,\ldots x_{k-1},x_{k+1}\ldots, x_n) \in \prod_{j=1,j\neq k}^{n}I_j$ and $ y \in \mathbb{R}.$
		Then there exists a unique continuous function $\tilde{f}: \prod_{k=1}^{n}I_k\rightarrow \mathbb{R}  $ such that
		\begin{enumerate}
			\item $\tilde{f}$ interpolates the given multivariate data. That is,  $$\tilde{f}(x_{1,i_1},x_{2,i_2},\ldots,x_{n,i_n})= y_{i_1i_2\ldots i_n},~~\forall~~(i_1,i_2,\ldots, i_n) \in \prod_{k = 1}^{n}\Sigma_{N_k,0} .$$  
			\item The graph of $\tilde{f}$ defined by $$G=\Big\{\big(x_1,x_2,\ldots,x_n, \tilde{f}(x_1,x_2,\ldots,x_n)\big):(x_1,x_2,\ldots,x_n) \in \prod_{k=1}^{n}I_k \Big\}$$ is self-referential in the following sense: $G$ is the union of transformed copies of itself given by
			$$G= \bigcup_ {(i_1,i_2,\ldots, i_n) \in \prod_{k=1}^{n}\Sigma_{N_k}} W_{i_1i_2\ldots i_n}(G).$$
		\end{enumerate}
	\end{theorem}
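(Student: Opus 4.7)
The plan is to follow the classical Read--Bajraktarević strategy: realize $\tilde f$ as the unique fixed point of a contraction on a complete metric space of continuous functions that already carries the correct corner values.

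First I would introduce the space
\[
\mathcal{C}^{*}=\Big\{f\in C\Big(\prod_{k=1}^{n}I_{k},\mathbb{R}\Big):\,f(x_{1,j_{1}},\ldots,x_{n,j_{n}})=y_{j_{1}\ldots j_{n}}\ \text{for all}\ (j_{1},\ldots,j_{n})\in\prod_{k=1}^{n}\partial\Sigma_{N_{k},0}\Big\},
\]
which is closed in $(C(\prod_{k}I_{k}),\|\cdot\|_{\infty})$ and therefore complete. On $\mathcal{C}^{*}$ define the Read--Bajraktarević-type operator $T$ by
\[
(Tf)(x_{1},\ldots,x_{n})=F_{i_{1}\ldots i_{n}}\big(u_{1,i_{1}}^{-1}(x_{1}),\ldots,u_{n,i_{n}}^{-1}(x_{n}),\,f(u_{1,i_{1}}^{-1}(x_{1}),\ldots,u_{n,i_{n}}^{-1}(x_{n}))\big)
\]
whenever $(x_{1},\ldots,x_{n})\in\prod_{k=1}^{n}I_{k,i_{k}}$. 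This is the coordinatewise rewrite of $(Tf)\circ u_{1,i_{1}}\otimes\cdots\otimes u_{n,i_{n}}=F_{i_{1}\ldots i_{n}}(\cdot,f(\cdot))$, which is exactly the self-referential equation we want $\tilde f$ to satisfy.

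Next I would verify that $T$ is well defined and sends $\mathcal{C}^{*}$ into itself. The main obstacle—and in my view the only delicate point of the proof—is single-valuedness on the shared faces of adjacent sub-hyper-rectangles. A point lying in the common face of $\prod_{k}I_{k,i_{k}}$ and the neighbour in which the $k$th index jumps from $i_{k}$ to $i_{k}+1$ has $x_{k}=x_{k,i_{k}}$; by the compatibility identity \eqref{constraintu4}, both sub-rectangle formulas evaluate $F$ at the same point $x_{k}^{*}=u_{k,i_{k}}^{-1}(x_{k,i_{k}})=u_{k,i_{k}+1}^{-1}(x_{k,i_{k}})$, with all other coordinates equal, and the matching condition \eqref{matching} forces the two values of $F$ to coincide. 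Intersections of faces of higher codimension are handled by applying this reduction one coordinate at a time. Continuity of $Tf$ then follows from continuity of $F_{i_{1}\ldots i_{n}}$ and $u_{k,i_{k}}^{-1}$ on each piece. The image condition at the $2^{n}$ corners of $\prod_{k}I_{k}$ is a direct consequence of \eqref{constraintF} applied with $(j_{1},\ldots,j_{n})\in\prod_{k}\partial\Sigma_{N_{k},0}$.

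With well-definedness in hand, contractivity is immediate from \eqref{constraintF1}: taking $\gamma:=\max_{(i_{1},\ldots,i_{n})}\gamma_{i_{1}\ldots i_{n}}<1$, one has $\|Tf-Tg\|_{\infty}\leq\gamma\|f-g\|_{\infty}$ on each sub-rectangle and hence globally. The Banach fixed point theorem yields a unique $\tilde f\in\mathcal{C}^{*}$ with $T\tilde f=\tilde f$. To obtain interpolation at an arbitrary data site $(x_{1,j_{1}},\ldots,x_{n,j_{n}})$ with possibly interior indices, I would write each $j_{k}$ as $\tau(i_{k},j_{k}')$ for some $i_{k}\in\Sigma_{N_{k}}$ and $j_{k}'\in\partial\Sigma_{N_{k},0}$, so that $u_{k,i_{k}}(x_{k,j_{k}'})=x_{k,j_{k}}$; applying the fixed-point equation together with the corner condition and \eqref{constraintF} then gives $\tilde f(x_{1,j_{1}},\ldots,x_{n,j_{n}})=y_{j_{1}\ldots j_{n}}$. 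Finally, the self-referential identity for $G$ is just the graph translation of $T\tilde f=\tilde f$, since for any $(x_{1},\ldots,x_{n})\in\prod_{k}I_{k}$ the point $W_{i_{1}\ldots i_{n}}(x_{1},\ldots,x_{n},\tilde f(x_{1},\ldots,x_{n}))$ lies in $G$, and every point of $G$ arises this way by choosing the cell containing its projection.
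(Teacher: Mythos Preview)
Your proposal is correct and follows essentially the same Read--Bajraktarevi\'c strategy as the paper: the same space $\mathcal{C}^*$, the same operator $T$, well-definedness via the matching conditions \eqref{matching} and \eqref{constraintu4}, contractivity from \eqref{constraintF1}, and the Banach fixed point theorem. The only organizational difference is that the paper verifies interpolation at \emph{all} grid points already for $T(g)$ with arbitrary $g\in\mathcal{C}^*$ (as part of showing $T$ maps $\mathcal{C}^*$ into itself), whereas you check only the $2^n$ corner conditions there and postpone full interpolation to the fixed point $\tilde f$; both arrangements are valid and rely on the same computation with $\tau$.
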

	
	\begin{proof}
		For convenience of the reader, we shall divide the proof into several steps. \\
		Step I: (\textbf{Considering Appropriate function space and Read-Bajraktarevi\'{c} operator}):	\\
		Let $\mathcal{C}\big(\prod_{k = 1}^{n}I_k\big)$ denote the Banach space of all real-valued continuous functions defined on the $n$-dimensional  hyperrectangle $\prod_{k = 1}^{n}I_k$
		endowed with the uniform norm. The subset 
		\begin{equation*}
			\begin{split}
				\mathcal{C}^*\big(\prod_{k = 1}^{n}I_k\big) = &~ \Big\{g \in \mathcal{C}\big(\prod_{k = 1}^{n}I_k\big): g(x_{1,j_1},x_{2,j_2},\ldots,x_{n,j_n}) = y_{j_{1}j_{2}\ldots j_{n}}:\\& (j_1,j_2,\ldots,j_n)\in\prod_{k=1}^{n}\partial\Sigma_{N_k,0}\Big\}.
			\end{split}
		\end{equation*}
		is a closed (and hence complete) metric subspace of $ 
		\mathcal{C}\big(\prod_{k = 1}^{n}I_k\big)$ with the uniform metric.
		Consider  the so-called Read-Bajraktarevi$\acute{c}$ (RB) operator (see also \cite{MF1}) $$T: \mathcal{C}^*(\prod_{k =1}^{n}I_k) \rightarrow \mathcal{C}^*(\prod_{k=1}^{n}I_k)$$ defined by
		\begin{equation}\label{RB}
			\begin{aligned}
				(Tg)(x_1,x_2,\ldots,x_n)=F_{i_1 i_2 \ldots i_n}\Big( u_{1,i_1}^{-1}(x_1),&\ldots,u_{n,i_n}^{-1}(x_n),g\big(u_{1,i_1}^{-1}(x_1),\ldots,u_{n,i_n}^{-1}(x_n)\big)\Big),
			\end{aligned}
		\end{equation}
		for all $(x_1,x_2,\ldots,x_n) \in \prod_{k=1}^{n}I_{k,i_k}$  and 
		$ (i_1,\ldots, i_n) \in \prod_{k=1}^{n}\Sigma_{N_{k}}.$\\
		\noindent
		Step II (\textbf{Proving that $T$ is well-defined}):	\\ Let $g\in \mathcal{C}^*(\prod_{k=1}^nI_k)$. For simplicity, let us consider $X = (x_1,\ldots,x_r, \dots,x_s, \dots, x_n)\in \prod_{k=1}^{n}I_{k,i_{k}}$ such that $x_{r}\in I_{r,i_r}\cap I_{r,i_r+1}$ and $x_{s}\in I_{s,i_s}\cap I_{s,i_s+1}$ for some $r,s \in \Sigma_n$ and $(i_r,i_s) \in\text{int}\Sigma_{N_{r},0}\times\text{int}\Sigma_{N_{s},0}.$ This is possible if and only if $x_r = x_{r,i_r}$ and $x_s = x_{s,i_s}.$ Without loss of generality we assume that $r<s$ and deal with the following four possible cases.\\
		
		Case 1. Treating $x_r = x_{r,i_r}$ as a point in $ I_{r,i_r}$ and $x_s = x_{s,i_s}$ as a point in $I_{s,i_s}$, we have
		\begin{equation*}
			\begin{aligned}
				T(g)(X) = F_{ i_1\ldots i_r \ldots i_s \ldots i_n}\Big(&  u_{1,i_1}^{-1}(x_1),\ldots,u_{r,i_r}^{-1}(x_{r,i_r}),\ldots,u_{s,i_s}^{-1}(x_{s,i_s})\ldots,u_{n,i_n}^{-1}(x_n),\\
				&g\big(u_{1,i_1}^{-1}(x_1),\ldots,u_{n,i_n}^{-1}(x_n)\big)\Big).
			\end{aligned}
		\end{equation*}
		Case 2. Consider $x_r = x_{r,i_r}$ as a point in $ I_{r,i_r+1}$ and $x_s = x_{s,i_s}$ as a point in  $I_{s,i_s}.$ Bearing (\ref{constraintu4}) and (\ref{matching}) in mind,  one gets
		\begin{equation*}
			\begin{aligned}
				T(g)(X) &= F_{i_1\ldots i_r+1\ldots i_s \ldots i_n}\Big( u_{1,i_1}^{-1}(x_1),\ldots u_{r,i_r+1}^{-1}(x_{r,i_r}),\ldots,u_{s,i_s}^{-1}(x_{s,i_s})\ldots,u_{n,i_n}^{-1}(x_n),\\
				&g\big(u_{1,i_1}^{-1}(x_1),\ldots,u_{n,i_n}^{-1}(x_n)\big)\Big)\\
				&= F_{i_1\ldots i_r \ldots i_s \ldots i_n}\Big( u_{1,i_1}^{-1}(x_1),\ldots u_{r,i_r}^{-1}(x_{r,i_r}),\ldots,u_{s,i_s}^{-1}(x_{s,i_s})\ldots,u_{n,i_n}^{-1}(x_n),\\
				&g\big(u_{1,i_1}^{-1}(x_1),\ldots,u_{n,i_n}^{-1}(x_n)\big)\Big).
			\end{aligned}
		\end{equation*}
		Case 3. Considering $x_r = x_{r,i_r}$ as an element in $ I_{r,i_r}$ and $x_s = x_{s,i_s}$ as an element  in $I_{s,i_s+1}$, similar to the previous case we have
		\begin{equation*}
			\begin{aligned}
				T(g)(X) &= F_{i_1\ldots i_r\ldots i_s+1 \ldots i_n}\Big( u_{1,i_1}^{-1}(x_1),\ldots u_{r,i_r}^{-1}(x_{r,i_r}),\ldots,u_{s,i_s+1}^{-1}(x_{s,i_s})\ldots,u_{n,i_n}^{-1}(x_n),\\
				&g\big(u_{1,i_1}^{-1}(x_1),\ldots,u_{n,i_n}^{-1}(x_n)\big)\Big)\\
				&= F_{i_1\ldots i_r \ldots i_s \ldots i_n}\Big( u_{1,i_1}^{-1}(x_1),\ldots u_{r,i_r}^{-1}(x_{r,i_r}),\ldots,u_{s,i_s}^{-1}(x_{s,i_s})\ldots,u_{n,i_n}^{-1}(x_n),\\
				&g\big(u_{1,i_1}^{-1}(x_1),\ldots,u_{n,i_n}^{-1}(x_n)\big)\Big).
			\end{aligned}
		\end{equation*}
		Case 4. Finally, let us view $x_r = x_{r,i_r}$ as a point in $I_{r,i_r+1}$ and $x_s = x_{s,i_s}$ as  a point in $I_{s,i_s+1}$. Using (\ref{constraintu4}) and (\ref{matching})  we obtain
		\begin{equation*}
			\begin{aligned}
				T(g)(X) &= F_{i_1\ldots i_r+1\ldots i_s+1 \ldots i_n}\Big( u_{1,i_1}^{-1}(x_1),\ldots u_{r,i_r+1}^{-1}(x_{r,i_r}),\ldots,u_{s,i_s+1}^{-1}(x_{s,i_s})\ldots,u_{n,i_n}^{-1}(x_n),\\
				&g\big(u_{1,i_1}^{-1}(x_1),\ldots,u_{n,i_n}^{-1}(x_n)\big)\Big)\\
				&= F_{i_1\ldots i_r \ldots i_s+1 \ldots i_n}\Big( u_{1,i_1}^{-1}(x_1),\ldots u_{r,i_r}^{-1}(x_{r,i_r}),\ldots,u_{s,i_s+1}^{-1}(x_{s,i_s+1})\ldots,u_{n,i_n}^{-1}(x_n),\\
				&g\big(u_{1,i_1}^{-1}(x_1),\ldots,u_{n,i_n}^{-1}(x_n)\big)\Big)\\
				&= F_{i_1\ldots i_r \ldots i_s \ldots i_n}\Big( u_{1,i_1}^{-1}(x_1),\ldots u_{r,i_r}^{-1}(x_{r,i_r}),\ldots,u_{s,i_s}^{-1}(x_{s,i_s})\ldots,u_{n,i_n}^{-1}(x_n),\\
				&g\big(u_{1,i_1}^{-1}(x_1),\ldots,u_{n,i_n}^{-1}(x_n)\big)\Big).
			\end{aligned}
		\end{equation*}
		We see that the value of $T(g)(X)$ is determined univocally in all the four cases. Similarly, all other possibilities can be worked out to conclude that $T(g)$ is well defined on the boundary of $\prod_{k = 1}^{n}I_{k,i_k}$. Furthermore, $T(g)$ is continuous on $\prod_{k=1}^{n} I_{k}$.\\
		Let $(i_1,i_2,\ldots,i_n)\in \prod_{k =1}^{n}\Sigma_{N_{k},0}$.  Choose $(j_1,j_2,\ldots,j_n)\in \prod_{k =1}^{n}\partial\Sigma_{N_{k},0}$ such that $(i_1,i_2,\ldots,i_n) = (\tau(i_1,j_1),\tau(i_2,j_2),\ldots,\tau(i_n,j_n))$. By the definition  of $\tau$, we have
		$$(x_{1,j_1},x_{2,j_2},\ldots,x_{n,j_n}) = \big(u_{1,i_1}^{-1}(x_{1,i_1}),u_{2,i_2}^{-1}(x_{2,i_2}),\ldots,u_{n,i_n}^{-1}(x_{n,i_n})\big).$$
		Thus,
		\begin{equation*}
			\begin{aligned}
				T(g)(x_{1,i_1},\ldots,x_{n,i_n})=&~ F_{i_1 i_2 \ldots i_n}\Big( u_{1,i_1}^{-1}(x_{1,i_1}),u_{2,i_2}^{-1}(x_{2,i_2}),\ldots,u_{n,i_n}^{-1}(x_{n,i_n}),\\
				&g\big(u_{1,i_1}^{-1}(x_{1,i_1}),u_{2,i_2}^{-1}(x_{2,i_2}),\ldots,u_{n,i_n}^{-1}(x_{n,i_n})\big)\Big)\\
				=&~ F_{i_1 i_2 \ldots i_n}\Big(x_{1,j_1},x_{2,j_2},\ldots,x_{n,j_n},g\big(x_{1,j_1},x_{2,j_2},\ldots,x_{n,j_n}\big)\Big)\\
				=&~ F_{i_1 i_2 \ldots i_n}\Big(x_{1,j_1},x_{2,j_2},\ldots,x_{n,j_n},y_{j_1j_2\ldots j_n}\Big)\\
				=&~ y_{\tau(i_1,j_1)\tau(i_2,j_2)\ldots\tau(i_n,j_n)}\\
				=&~ y_{i_1i_2\ldots i_n},
			\end{aligned}
		\end{equation*}
		showing that $T(g)$ interpolates the data in $\Delta$ for all $g\in \mathcal{C}^*(\prod_{k=1}^{n}I_k).$ In particular, $T$ maps $\mathcal{C}^*(\prod_{k=1}^{n}I_k)$ into $\mathcal{C}^*(\prod_{k=1}^{n}I_k).$ \\
		Step III (\textbf{Proving that $T$ is a contraction}):\\ Let $g,h \in \mathcal{C}^*(\prod_{k=1}^{n}I_k)$ and $X = (x_{1,i_1},\ldots,x_{n,i_n}) \in \prod_{k=1}^{n}I_{k,i_k}.$ Using  (\ref{constraintF1}) and (\ref{RB}) 
		we have
		\begin{equation*}
			\begin{aligned}
				\big|T(g) (X)-T(h)(X)\big| &= \Big|F_{i_1 i_2 \ldots i_n}\Big( u_{1,i_1}^{-1}(x_1),\ldots,u_{n,i_n}^{-1}(x_n),g\big(u_{1,i_1}^{-1}(x_1),\ldots,u_{n,i_n}^{-1}(x_n)\big)\Big)-\\
				&F_{i_1 i_2 \ldots i_n}\Big( u_{1,i_1}^{-1}(x_1),\ldots,u_{n,i_n}^{-1}(x_n),h\big(u_{1,i_1}^{-1}(x_1),\ldots,u_{n,i_n}^{-1}(x_n)\big)\Big)\Big|\\
				&\leq \alpha_{i_1 i_2 \ldots i_n}\Big|g\big(u_{1,i_1}^{-1}(x_1),\ldots,u_{n,i_n}^{-1}(x_n)\big)-h\big(u_{1,i_1}^{-1}(x_1),\ldots,u_{n,i_n}^{-1}(x_n)\big)\Big|\\
				&\leq \|\alpha\|_{\infty}\|g-h\|_{\infty},
			\end{aligned}
		\end{equation*}
		where $\|\alpha\|_{\infty} = \max\{\alpha_{i_1 i_2 \ldots i_n}: (i_1, i_2, \ldots, i_n) \in \prod_{k=1}^{n}\Sigma_{N_k}\}.$ Since the above inequality holds for all $X \in \prod_{k=1}^{n}I_k$, it follows that
		\begin{equation*}
			\|T(g)-T(h)\|_{\infty} \leq \|\alpha\|_{\infty}\|g-h\|_{\infty}.
		\end{equation*}
		This yields that $T$ is a contraction on $\mathcal{C}^*(\prod_{k=1}^{n}I_k).$\\
		Step IV (\textbf{Proving the existence by the application of the Banach fixed point theorem})\\
		By the Banach fixed point theorem $T$ has a unique fixed point. That is,  there exists a unique function $\tilde{f}\in \mathcal{C}^*(\prod_{k=1}^{n}I_k)$ such that
		\begin{equation}\label{selfreferential}
			\begin{aligned}
				\tilde{f}(x_1,\ldots,x_n)=&F_{i_1 i_2 \ldots i_n}\Big( u_{1,i_1}^{-1}(x_1),\ldots,u_{n,i_n}^{-1}(x_n),\tilde{f}\big(u_{1,i_1}^{-1}(x_1),\ldots,u_{n,i_n}^{-1}(x_n)\big)\Big),\\
				&\forall ~~ (x_1,\ldots x_n) \in \prod_{k=1}^{n}I_{k,i_k} \hspace{0.2cm} \text{and}\hspace{0.2cm} (i_1,\ldots, i_n) \in \prod_{k=1}^{n}\Sigma_{N_{k}}.
			\end{aligned}
		\end{equation}
		Writing $X = (x_1,x_2,\ldots,x_n)$, $ u_{i_1,\ldots i_n}^{-1}(X) = \big(u_{1,i_1}^{-1}(x_1),\ldots,u_{n,i_n}^{-1}(x_n)\big)$ and $u_{i_1,\ldots i_n}(X) = \big(u_{1,i_1}(x_1),\ldots,u_{n,i_n}(x_n)\big),$ we have the self-referential equation
		\begin{equation}\label{selfreferential1}
			\begin{aligned}
				\tilde{f}(X) = F_{i_1 i_2 \ldots i_n}\big(u_{i_1,\ldots i_n}^{-1}(X),\tilde{f}(u_{i_1,\ldots i_n}^{-1}(X))\big),~~ \forall ~~X \in \prod_{k=1}^{n}I_{k,i_k} \hspace{0.2cm} \text{and} \hspace{0.2cm} (i_1,\ldots, i_n) \in \prod_{k=1}^{n}\Sigma_{N_{k}}.
			\end{aligned}
		\end{equation}
		Equivalently,
		\begin{equation}\label{selfreferential2}
			\begin{aligned}
				\tilde{f}\big(u_{i_1,\ldots i_n}(X)\big) = F_{i_1 i_2 \ldots i_n}\big(X,\tilde{f}(X)\big), \quad \forall ~~X \in \prod_{k=1}^{n}I_k \hspace{0.2cm} \text{and} \hspace{0.2cm} (i_1,\ldots, i_n) \in \prod_{k=1}^{n}\Sigma_{N_{k}}.
			\end{aligned}
		\end{equation}
		Let $G=\big\{(X, \tilde{f}(X)): X \in \prod_{k =1}^{n}I_k \big\}$ be the graph of $\tilde{f}.$ In view of (\ref{IFS}) and (\ref{selfreferential2}) 
		\begin{equation*}
			\begin{aligned}
				&\bigcup{\Big\{W_{i_1i_2\ldots i_n}(G) : {(i_1,i_2,\ldots,i_n) \in \prod_{k=1}^{n}\Sigma_{N_k}}\Big\}}\\
				&= \bigcup{ \Big\{W_{i_1i_2\ldots i_n}(X,\tilde{f}(X)) : X \in \prod_{k =1}^{n}I_k, {(i_1,i_2,\ldots,i_n)\in\prod_{k=1}^{n}\Sigma_{N_k}}\Big\}}\\
				&= \bigcup{\Big\{\big(u_{i_1\ldots i_n}(X),F_{i_1 i_2 \ldots i_n}(X,\tilde{f}(X))\big): X \in \prod_{k =1}^{n}I_k,{(i_1,i_2,\ldots,i_n)\in\prod_{k=1}^{n}\Sigma_{N_k}}\Big\}}\\
				&= \bigcup\big{\{\big(u_{i_1\ldots i_n}(X),\tilde{f}(u_{i_1,\ldots i_n}(X))\big): X \in \prod_{k =1}^{n}I_k,{(i_1,i_2,\ldots,i_n) \in \prod_{k=1}^{n}\Sigma_{N_k}}\Big\}}\\
				&= \bigcup \Big\{(X,f(X)): X \in \prod_{k=1}^n I_k\Big\}\\
				&= G,
			\end{aligned}
		\end{equation*}completing the proof.
	\end{proof}
	\begin{definition}
		The function  $\tilde{f}$ appearing in the previous theorem is termed a multivariate FIF.
	\end{definition}
	
	\begin{remark}
		As mentioned in the introductory section, in contrast to the univariate and bivariate settings,   a very few  studies addressed the multivariate fractal interpolation; \cite{PB1} and \cite{HM} are worth mentioning. In \cite{PB1} the authors construct multivariate fractal interpolation functions on the hyperrectangle $[0,1]^n$.  However, the approach in these references is  primarily based on the recurrent IFS. The construction in  \cite{HM} is based on the recurrent IFS and uses the technique of triangulation of the domain. Also, the authors deal with a special choice, namely affine maps, for  the functions $v_{i}: \mathbb{R}^{n} \times \mathbb{R}^{m} \to \mathbb{R}^{m}$ which take the role of $F_{i_1\ldots i_n}$ in our notation.  We  deal with more general functions (not necessarily affine) satisfying the required boundary conditions. More importantly,  the constructions  in \cite{PB1, HM} do not assist us for the $\alpha$-fractal function formalism and related approximation theoretic aspects of the multivariate FIFs that form the main focus of the current paper. 
	\end{remark}

	\section{A parameterized family of Multivariate fractal functions and associated fractal operator }
	Let $n \in \mathbb{N}$, $n \ge 2$ be fixed and $I_k=[a_k,b_k] \subset \mathbb{R}$ be a compact interval for $k=1,2,\dots,n$. Consider the $n$-dimensional  hyperrectangle  $\prod_{k=1}^{n}I_k$  and the space $\mathcal{C}\big(\prod_{k=1}^{n}I_k\big)$ endowed with the uniform norm. Let 
	a function $f\in \mathcal{C}\big(\prod_{k=1}^{n}I_k\big)$  be fixed, but arbitrary. We shall refer to this function as the \textit{seed function} or \textit{germ function}. 
	\subsection{Multivariate $\alpha$-fractal function}
	Here we obtain a parameterized family of fractal functions associated to a prescribed germ  function $f$ by using the idea of multivariate fractal interpolation enunciated in the previous section.  
	\par With a slight abuse of notation, consider the set  $$\Delta = \Big\{(x_{1,i_1},x_{2,i_2},\ldots,x_{n,i_n}) \in \prod_{k=1}^{n}I_k \subset \mathbb{R}^n:i_k \in \Sigma_{N_k,0},~k\in \Sigma_n\Big\},$$ where $a_k=x_{k,0}< x_{1,k} < \dots <x_{k,N_{k}}=b_k$ for each $k\in \Sigma_n:=\{1,2,\ldots,n\}$. Note that $\{x_{k,0},x_{1,k}, \dots,x_{k,N_{k}}\}$ forms a partition of the interval $[a_k,b_k]$ with the aid of which  $\Delta$ determines a partition of the hyperrectangle. Let us sample the germ function $f$ at the points in $\Delta$ and consider the data set 
	$$\Big\{\big(x_{1,i_1},\ldots,x_{n,i_n}, f(x_{1,i_1},\ldots,x_{n,i_n})\big) \in \prod_{k=1}^{n}I_k \times \mathbb{R}:i_k \in \Sigma_{N_k,0},~k\in \Sigma_n\Big\}.$$
	We shall denote the above data set also by $\Delta$. \\
	Suppose that the affine maps $u_{k,i_k}:I_{k} \rightarrow I_{k,i_k}$ is defined as follows
	\begin{equation}
		\begin{aligned}
			u_{k,i_k}(x) = a_{k,i_k}x + b_{k,i_k}, \quad i_k \in \Sigma_{N_k},~~ k \in \Sigma_{n},
		\end{aligned}
	\end{equation}
	where $a_{k,i_k}$ and $b_{k,i_k}$ are chosen such that the contractive maps $u_{k,i_k}$ satisfy (\ref{constraintu1}) and (\ref{constraintu4}).
	Choose a function $b \in \mathcal{C}\big(\prod_{k=1}^{n}I_k\big)$ such that for all $(j_1,j_2,\ldots, j_n) \in \prod_{k=1}^n\partial\Sigma_{N_k,0}$,
	\begin{equation}\label{base}
		\begin{aligned}
			b(x_{1,j_1},x_{2,j_2},\ldots,x_{n,j_n}) = f(x_{1,j_1},x_{2,j_2},\ldots,x_{n,j_n}).
		\end{aligned}
	\end{equation}
	Consider a continuous  map $\alpha:\prod_{k=1}^{n}I_k \to \mathbb{R}$ such that $\|\alpha\|_{\infty} < 1.$ As in the univariate counterpart, we call $b$ as the base function and $\alpha$ as the scaling function. Define
	\begin{equation}\label{Falpha}
		\begin{aligned}
			&F_{i_1 i_2 \ldots i_n}\big(x_1,x_2,\dots,x_n,y\big)\\=&~ f\big(u_{1,i_1}(x_1), u_{2,i_2}(x_2)\ldots u_{n,i_n}(x_n)\big)\\& + \alpha\big(u_{1,i_1}(x_1), u_{2,i_2}(x_2)\ldots u_{n,i_n}(x_n)\big)\big(y-b(x_1,x_2, \dots,x_n)\big).
		\end{aligned}
	\end{equation}
	For $(i_1,i_2,\dots,i_n) \in \prod_{k=1}^n \Sigma_{N_k}$ and $(j_1,j_2, \dots j_n) \in \prod_{k=1}^n \Sigma_{N_k,0}$, we have 
	
	\begin{equation*}
		\begin{aligned}
			&F_{i_1 i_2 \ldots i_n} \big(x_{1,j_1},\dots, x_{n,j_n}, f (x_{1,j_1},\dots, x_{n,j_n}) \big) \\= &~ f\big(u_{1,i_1}(x_{1,j_1}), u_{2,i_2}(x_{2,j_2})\ldots u_{n,i_n}(x_{n,j_n})\big)\\
			=& f\big(x_{1, \tau(i_1,j_1)}, \dots, x_{n, \tau(i_n,j_n)} \big),
		\end{aligned}
	\end{equation*}
	verifying that (\ref{constraintF}) holds. Using the condition $\|\alpha\|_\infty <1$, one can easily see that the contractivity condition prescribed in (\ref{constraintF1}) is satisfied. Let
	$ i_k \in \text{int}\Sigma_{N_k,0}$, $ 1\leq k\leq n$, $(i_1,i_2,\ldots, i_n) \in \prod_{k=1}^{n}\Sigma_{N_{k}}$ and $x_k^*=u_{k,i_k}^{-1}(x_{k,i_k})=u_{k,i_k+1}^{-1}(x_{k,i_k}).$ For any $y \in \mathbb{R}$, 
	
	\begin{equation*}
		\begin{aligned}	
			&~F_{i_1\ldots i_{k-1}i_ki_{k+1} \ldots i_n}(x_1,\ldots,x_{k-1}, x_k^*, x_{k+1},\ldots x_n,y) \\=&~	f\big(u_{1,i_1}(x_1), u_{2,i_2}(x_2)\ldots, u_{k-1,i_{k-1}}(x_{k-1}), u_{k,i_k}(x_k^*),u_{k+1,i_{k+1}}(x_{k+1}), \dots, u_{n,i_n}(x_n)\big)\\& + \alpha\big(u_{1,i_1}(x_1), u_{2,i_2}(x_2)\ldots, u_{k-1,i_{k-1}}(x_{k-1}), u_{k,i_k}(x_k^*),u_{k+1,i_{k+1}}(x_{k+1}), \dots, u_{n,i_n}(x_n)\big)\\&~ .\big(y-b(x_1,x_2, \dots,x_n)\big)\\
			=&~f\big(u_{1,i_1}(x_1), u_{2,i_2}(x_2)\ldots, u_{k-1,i_{k-1}}(x_{k-1}), u_{k,i_k+1}(x_k^*),u_{k+1,i_{k+1}}(x_{k+1}), \dots, u_{n,i_n}(x_n)\big)\\& + \alpha\big(u_{1,i_1}(x_1), u_{2,i_2}(x_2)\ldots, u_{k-1,i_{k-1}}(x_{k-1}), u_{k,i_k+1}(x_k^*),u_{k+1,i_{k+1}}(x_{k+1}), \dots, u_{n,i_n}(x_n)\big)\\&~ .\big(y-b(x_1,x_2, \dots,x_n)\big)\\
			=&~F_{i_1\ldots i_{k-1}i_k+1i_{k+1} \ldots i_n}(x_1,\ldots,x_{k-1}, x_k^*, x_{k+1},\ldots x_n,y).
		\end{aligned}
	\end{equation*}
	Therefore, the  functions $F_{i_1 i_2\dots i_n}$ in (\ref{Falpha}) satisfy  the conditions prescribed in (\ref{constraintF})-(\ref{constraintF1}) and the matching condition in (\ref{matching}). Ergo, by Theorem (\ref{FIF}) there exists a unique fractal interpolation function, which we shall denote by $f^{\alpha}_{\Delta,b}:\prod_{k =1}^n I_k \to \mathbb{R}$, such that it satisfies the self-referential functional equation
	\begin{equation}\label{selfref}
		\begin{aligned}
			f^{\alpha}_{\Delta,b}(x_1,x_2,\dots,x_n) = &~ f(x_1,x_2,\dots,x_n) +\\ &\alpha(x_1,x_2,\dots,x_n)\Bigg((f^{\alpha}_{\Delta,b}-b)\big(u_{1,i_1}^{-1}(x_1),u_{2,i_2}^{-1}(x_2),\ldots u_{n,i_n}^{-1}(x_n)\big)\Bigg),
		\end{aligned}
	\end{equation} 
	for all  $ (x_1,x_2,\dots,x_n) \in \prod_{k=1}^n I_{k,i_k}$ and $ (i_1,i_2,\ldots, i_n) \in \prod_{k=1}^n \Sigma_{N_k}$. \\
	As previously, with the notation 
	\begin{equation*}
		\begin{aligned}
			X= &~ (x_1,x_2,\dots,x_n),\\ 
			u_{i_1i_2\ldots i_n}^{-1}(X) =&~ \big(u_{1,i_1}^{-1}(x_1),u_{2,i_2}^{-1}(x_2),\ldots u_{n,i_n}^{-1}(x_n)\big),
		\end{aligned}
	\end{equation*}
	we may write the functional equation for the fractal function $f_{\Delta,b}^\alpha$ as follows.
	\begin{equation}\label{selfrefnewA}
		f^{\alpha}_{\Delta,b}(X)=f(X)+ \alpha(X) (f^{\alpha}_{\Delta,b}-b)\big(  u_{i_1i_2\ldots i_n}^{-1}(X)\big),
	\end{equation}
	for $X \in \prod_{k=1}^n I_{k,i_k}$ and $ (i_1,i_2,\ldots, i_n) \in \prod_{k=1}^n \Sigma_{N_k}$. It is worth to mention that
	\begin{equation*}
		f^{\alpha}_{\Delta,b}(x_{1,i_1},x_{2,i_2},\ldots,x_{n,i_n}) = f(x_{1,i_1},x_{2,i_2},\ldots,x_{n,i_n}), \quad \forall~~ i_k \in \Sigma_{N_k,0},~k \in \Sigma_n.
	\end{equation*}
	\begin{definition}
		The aforementioned continuous function $f^{\alpha}_{\Delta,b}:\prod_{k=1}^n I_k \to \mathbb{R}$ is referred to as the multivariate $\alpha$-fractal interpolation function corresponding to the seed function $f$,  associated with the scale function $\alpha,$ partition $\Delta$ and  base function $b.$ It can be viewed as a fractal perturbation of the germ function $f.$
	\end{definition}
	\begin{remark}
		With different admissible choices of parameters $\Delta$, $\alpha$ and $b$, in fact, we obtain a parameterized family of self-referential functions $\{f_{\Delta,b} ^\alpha\}$, each of which interpolates the germ function at points in $\Delta$. 
	\end{remark}Using the self-referential functional equation satisfied by $f_{\Delta,b}^\alpha$, it is straightforward to see the following inequality. 
	
	\begin{proposition}\label{error1}
		Let $f \in \mathcal{C}(\prod_{k=1}^{n}I_k)$ and the parameters $\alpha$, $\Delta$ and $b$ be fixed as in the construction above. Then,
		\begin{equation}
			\begin{aligned}
				\|f^{\alpha}_{\Delta,b} - f\|_{\infty} \leq \|\alpha\|_{\infty}\|f^{\alpha}_{\Delta,b}-b\|_{\infty}.
			\end{aligned}
		\end{equation}
	\end{proposition}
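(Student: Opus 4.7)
The plan is to obtain the inequality directly from the self-referential functional equation \eqref{selfrefnewA}. Rearranging that identity gives, for every $X \in \prod_{k=1}^n I_{k,i_k}$ and every multi-index $(i_1,\ldots,i_n) \in \prod_{k=1}^n \Sigma_{N_k}$,
\begin{equation*}
f^{\alpha}_{\Delta,b}(X) - f(X) = \alpha(X)\,(f^{\alpha}_{\Delta,b}-b)\big(u_{i_1 i_2 \ldots i_n}^{-1}(X)\big).
\end{equation*}
So the first step is simply to isolate $f^{\alpha}_{\Delta,b} - f$ on the left-hand side.

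Next I would take absolute values of both sides and estimate pointwise. On the right we get $|\alpha(X)|\cdot |(f^{\alpha}_{\Delta,b}-b)(u_{i_1 i_2 \ldots i_n}^{-1}(X))|$, which is bounded above by $\|\alpha\|_\infty \|f^{\alpha}_{\Delta,b}-b\|_\infty$, since $u_{i_1 i_2 \ldots i_n}^{-1}(X) \in \prod_{k=1}^n I_k$ whenever $X \in \prod_{k=1}^n I_{k,i_k}$ (the inverse is well-defined precisely on the image subrectangle). This bound is independent of the particular subrectangle chosen, so it is uniform in $X$ and in the multi-index.

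Finally, since the hyperrectangle $\prod_{k=1}^n I_k$ is covered by the subrectangles $\prod_{k=1}^n I_{k,i_k}$ as $(i_1,\ldots,i_n)$ ranges over $\prod_{k=1}^n \Sigma_{N_k}$, taking the supremum over $X \in \prod_{k=1}^n I_k$ yields
\begin{equation*}
\|f^{\alpha}_{\Delta,b} - f\|_\infty \le \|\alpha\|_\infty \|f^{\alpha}_{\Delta,b}-b\|_\infty,
\end{equation*}
which is exactly the asserted inequality. There is really no hard step here: the only minor point that needs a sentence of justification is that $u_{i_1 i_2 \ldots i_n}^{-1}$ sends $\prod_{k=1}^n I_{k,i_k}$ onto $\prod_{k=1}^n I_k$, so the inner factor is legitimately controlled by the global uniform norm of $f^{\alpha}_{\Delta,b}-b$; everything else is an immediate consequence of \eqref{selfrefnewA}.
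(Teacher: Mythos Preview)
Your proof is correct and follows exactly the approach the paper itself indicates: the paper simply remarks that the inequality is ``straightforward to see'' from the self-referential functional equation \eqref{selfrefnewA}, and your argument spells out precisely that computation. There is nothing to add.
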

	Proposition \ref{error1} in conjunction with the triangle inequality yields the following upper bound for the uniform distance between a germ function $f$ and its fractal counterpart $f^{\alpha}_{\Delta,b}$. 
	\begin{proposition}\label{error2}
		Let $f \in \mathcal{C}(\prod_{k=1}^{n}I_k)$.  Assume that the parameters $\alpha$, $\Delta$ and $b$ are fixed as in the construction above. Then,
		\begin{equation}
			\begin{aligned}
				\|f^{\alpha}_{\Delta,b} - f\|_{\infty} \leq \frac{\|\alpha\|_{\infty}}{1-\|\alpha\|_{\infty}}\|f-b\|_{\infty}.
			\end{aligned}
		\end{equation}
	\end{proposition}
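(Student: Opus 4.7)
The plan is to combine Proposition \ref{error1} with the triangle inequality applied to the pair $(f^{\alpha}_{\Delta,b}-b)$ decomposed as $(f^{\alpha}_{\Delta,b}-f) + (f-b)$, and then solve the resulting linear inequality in $\|f^{\alpha}_{\Delta,b}-f\|_{\infty}$. The hypothesis $\|\alpha\|_\infty<1$ is what allows the final division.

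First I would invoke Proposition \ref{error1} to write
\begin{equation*}
\|f^{\alpha}_{\Delta,b}-f\|_{\infty}\le \|\alpha\|_{\infty}\,\|f^{\alpha}_{\Delta,b}-b\|_{\infty}.
\end{equation*}
Next, by the triangle inequality in $\mathcal{C}(\prod_{k=1}^{n}I_k)$ endowed with the uniform norm,
\begin{equation*}
\|f^{\alpha}_{\Delta,b}-b\|_{\infty}\le \|f^{\alpha}_{\Delta,b}-f\|_{\infty}+\|f-b\|_{\infty}.
\end{equation*}
Substituting this into the previous inequality gives
\begin{equation*}
\|f^{\alpha}_{\Delta,b}-f\|_{\infty}\le \|\alpha\|_{\infty}\,\|f^{\alpha}_{\Delta,b}-f\|_{\infty}+\|\alpha\|_{\infty}\,\|f-b\|_{\infty}.
\end{equation*}

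Finally, since $\|\alpha\|_{\infty}<1$ by construction, the coefficient $1-\|\alpha\|_{\infty}$ is strictly positive, so I may rearrange and divide to obtain the required estimate
\begin{equation*}
\|f^{\alpha}_{\Delta,b}-f\|_{\infty}\le \frac{\|\alpha\|_{\infty}}{1-\|\alpha\|_{\infty}}\,\|f-b\|_{\infty}.
\end{equation*}
There is no real obstacle here: the essential work has already been done in Proposition \ref{error1}, which in turn rests on the self-referential equation \eqref{selfrefnewA}. One should just confirm implicitly that $\|f^{\alpha}_{\Delta,b}-f\|_{\infty}$ is finite so that the rearrangement is legitimate; this is immediate because $f^{\alpha}_{\Delta,b}$ and $f$ both lie in $\mathcal{C}(\prod_{k=1}^{n}I_k)$ on a compact hyperrectangle.
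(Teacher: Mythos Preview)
Your proof is correct and follows exactly the approach indicated in the paper, which simply notes that Proposition~\ref{error1} in conjunction with the triangle inequality yields the stated bound. There is nothing to add.
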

	\begin{remark}\label{convergenceremark}
		Bearing Proposition \ref{error2} in mind, we have the following results that point to  the approximation of the germ function $f\in \mathcal{C}(\prod_{k=1}^{n}I_k)$ with its fractal counterparts.
		\begin{enumerate}
			\item Let the base function $b$ and the partition $\Delta$ in the construction of $\alpha$-fractal interpolation functions corresponding to $f$ be fixed. Assume that $(\alpha_m)_{m\in\mathbb{N}}$ is a sequence of scale functions such that $\|\alpha^m\|_\infty<1$ for all $m\in \mathbb{N}$ and  $\|\alpha_m\|_{\infty} \to 0$ as $m \to \infty.$ Then, $\|f^{\alpha_m}_{\Delta,b} - f\|_{\infty} \to 0$ as $ m \to \infty.$
			\item  Let the scale function $\alpha$ and the partition $\Delta$ in the construction of $\alpha$-fractal interpolation functions corresponding to $f$ be fixed. Assume that 
			$(b_m)_{m\in\mathbb{N}}$ be sequence of base functions such that $\| f-b_m\|_{\infty} \to 0$ as $m \to \infty.$ Then $\|f^{\alpha}_{\Delta,b_m} - f\|_{\infty} \to 0$ as $m \to \infty$.
		\end{enumerate}
	\end{remark}
	\subsection{Fractal Operator}
	Here we choose the base function $b$ in the construction of $\alpha$-fractal  function $f_{\Delta,b}^\alpha$ through  an operator  $L:\mathcal{C}(\prod_{k=1}^{n}I_k) \to \mathcal{C}(\prod_{k=1}^{n}I_k)$ as follows. Assume that  $L:\mathcal{C}(\prod_{k=1}^{n}I_k) \to \mathcal{C}(\prod_{k=1}^{n}I_k)$ satisfies
	$$L(f)(x_{1,j_1},\ldots, x_{n,j_n}) = f(x_{1,j_1},\ldots, x_{n,j_n}), \quad \forall~~ (j_1,\ldots,j_n) \in \prod_{k=1}^n\partial\Sigma_{N_k,0}.$$
	Such an operator will be referred to as an admissible operator.  Take $b = L(f)$.
	In this case, we denote the $\alpha-$fractal function $f^{\alpha}_{\Delta,b}$ corresponding to $f$ as $f^{\alpha}_{\Delta,L}$. In contrast to the univariate case studied in the literature, here, in general,  the operator $L$ that defines the parameter map $b$ is not necessarily linear. This will help the fractal operator to have access to the realm of nonlinear operator theory. 
	\begin{definition}
		Fix a partition $\Delta$, scale function $\alpha$ and operator $L:\mathcal{C}(\prod_{k=1}^{n}I_k) \to \mathcal{C}(\prod_{k=1}^{n}I_k)$ as mentioned above. The operator $$\mathcal{F}^{\alpha}_{\Delta,L}:\mathcal{C}(\prod_{k=1}^{n}I_k) \to \mathcal{C}(\prod_{k=1}^{n}I_k),\quad \mathcal{F}^{\alpha}_{\Delta,L}(f)= f^{\alpha}_{\Delta,L},$$ is called the (multivariate) fractal operator.
	\end{definition}
	\begin{remark}
		By the construction of $\alpha$-fractal function given above, it follows that the fractal operator $\mathcal{F}^{\alpha}_{\Delta,L}$ interpolates $f$ at the points in the chosen partition $\Delta$. Further,	in view of Proposition \ref{error2}, we have 
		\begin{equation} \label{errA}
			\|\mathcal{F}^{\alpha}_{\Delta,L}(f) -f\|_{\infty} \leq \frac{\|\alpha\|_{\infty}}{1-\|\alpha\|_{\infty}}\|f-L(f)\|_{\infty}.
		\end{equation}	
		Thus, for a suitable choice of the scale function $\alpha$ and/or base function $b$,  $\mathcal{F}^{\alpha}_{\Delta,L}(f)$ approximates $f$ sufficiently well. Further, if $L(f)=f$ or $\alpha=0$, then $\mathcal{F}^\alpha_{\Delta, L} (f)=f.$ In particular, if $L=I$, the identity operator on $\mathcal{C}(\prod_{k=1}^{n}I_k)$, then $\mathcal{F}_{\Delta, L}^\alpha = I$. 
	\end{remark} 
	\begin{remark}\label{remnew0}
		As in the univariate setting \cite{M1}, it can be proved that if the operator  $L:\mathcal{C}(\prod_{k=1}^{n}I_k) \to \mathcal{C}(\prod_{k=1}^{n}I_k)$ is a linear operator, then the corresponding  fractal operator $\mathcal{F}^{\alpha}_{\Delta,L}:\mathcal{C}(\prod_{k=1}^{n}I_k) \to \mathcal{C}(\prod_{k=1}^{n}I_k)$ is also linear. Furthermore, if $L$ is a bounded linear operator, then it follows from (\ref{errA})  that the  corresponding fractal operator 
		$\mathcal{F}^{\alpha}_{\Delta,L}$ is also a bounded linear operator.  In the remaining part of this section, we  bring to light a few properties of the fractal operator beyond the familiar terrain of bounded linear operators. 
	\end{remark}
	Let us recall a pair of basic definitions from the perturbation theory of operators. For the duration of this section,  let $X$ and $Y$ be two normed linear spaces over the same field $\mathbb{K}= \mathbb{R}~ \text{or}~ \mathbb{C}$.
	
	\begin{definition}
		Let $A,B$ be two operators between $X$ and $Y$. Then $A$ is called relatively bounded with respect to $B$ (or simply $B$-bounded) if there exist nonnegative constants $a,b$ such that 
		\begin{equation}\label{relbdd}
			\|A(x)\| \leq a\|x\| +b\|B(x)\|, \quad \forall~ x\in X.
		\end{equation}
		The infimum of all such values of $b$ is called the $B$-bound of $A$.
	\end{definition}

	\begin{definition}
		Let  $A,B:X \to Y$ be two operators  between $X$ and $Y$. Then $A$ is called relatively Lipschitz with respect to $B$ (or simply $B$-Lipschitz) if 
		\begin{equation}\label{relLip}
			\|A(x) - A(y)\| \leq a\|x - y\| +b\|B(x) - B(y)\|, \quad \forall~ x,y\in X,
		\end{equation}
		for some nonnegative constants $a$ and $b$. The infimum of all such values of $b$ is called the $B$-Lipschitz constant of $A$.
	\end{definition}
	
	\begin{remark}
		Let $b_0$ be the infimum of all values of $b$ satisfying (\ref{relbdd}) or (\ref{relLip}). Then (\ref{relbdd}) or (\ref{relLip}) may not hold with $b=b_0$, because $a$ may tend to infinity as $b$ approaches $b_0.$
	\end{remark}

	\begin{proposition}
		The fractal operator $\mathcal{F}^{\alpha}_{\Delta,L}:\mathcal{C}(\prod_{k=1}^{n}I_k) \to \mathcal{C}(\prod_{k=1}^{n}I_k)$ is relatively bounded with respect to $L$  with $L$-bound less than or equal to  $\dfrac{\|\alpha\|_{\infty}}{1-\|\alpha\|_{\infty}}.$ 
	\end{proposition}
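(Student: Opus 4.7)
The plan is to read off the result directly from Proposition \ref{error2}, which gives the norm estimate
\[
\|\mathcal{F}^{\alpha}_{\Delta,L}(f) - f\|_{\infty} \le \frac{\|\alpha\|_{\infty}}{1-\|\alpha\|_{\infty}}\,\|f - L(f)\|_{\infty},
\]
and then apply the triangle inequality twice to rewrite the right-hand side in the form required by the definition of relative boundedness.

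First, I would start from the trivial bound $\|\mathcal{F}^{\alpha}_{\Delta,L}(f)\|_{\infty} \le \|\mathcal{F}^{\alpha}_{\Delta,L}(f) - f\|_{\infty} + \|f\|_{\infty}$. Substituting the estimate above and then using $\|f - L(f)\|_{\infty} \le \|f\|_{\infty} + \|L(f)\|_{\infty}$ gives
\[
\|\mathcal{F}^{\alpha}_{\Delta,L}(f)\|_{\infty} \le \|f\|_{\infty} + \frac{\|\alpha\|_{\infty}}{1-\|\alpha\|_{\infty}}\bigl(\|f\|_{\infty} + \|L(f)\|_{\infty}\bigr) = \frac{1}{1-\|\alpha\|_{\infty}}\|f\|_{\infty} + \frac{\|\alpha\|_{\infty}}{1-\|\alpha\|_{\infty}}\|L(f)\|_{\infty}.
\]
This is precisely the relative boundedness inequality (\ref{relbdd}) with $a = \frac{1}{1-\|\alpha\|_{\infty}}$ and $b = \frac{\|\alpha\|_{\infty}}{1-\|\alpha\|_{\infty}}$, so the $L$-bound of $\mathcal{F}^{\alpha}_{\Delta,L}$ is at most $\frac{\|\alpha\|_{\infty}}{1-\|\alpha\|_{\infty}}$ by definition of the infimum.

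There is essentially no obstacle here: the proof is a two-line consequence of Proposition \ref{error2} and the triangle inequality, and does not require any further analysis of the self-referential equation. The only minor point to be careful about is that the infimum in the definition of the $L$-bound need not be attained (as flagged in the remark preceding the proposition), so I would phrase the conclusion as an upper bound on the $L$-bound rather than claiming equality. No additional hypotheses on $L$ (such as linearity or boundedness) are needed, which is why the statement sits naturally in the nonlinear setting emphasized in Remark \ref{remnew0}.
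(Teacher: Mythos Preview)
Your proof is correct and follows essentially the same route as the paper: start from the estimate in Proposition~\ref{error2} (equivalently (\ref{errA})), apply the triangle inequality to $\|f-L(f)\|_\infty$, and then once more to isolate $\|\mathcal{F}^{\alpha}_{\Delta,L}(f)\|_\infty$, arriving at the same constants $a=\frac{1}{1-\|\alpha\|_\infty}$ and $b=\frac{\|\alpha\|_\infty}{1-\|\alpha\|_\infty}$. Your additional remark that the $L$-bound is only asserted as an upper bound is appropriate and matches the paper's phrasing.
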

	\begin{proof}
		From (\ref{errA}) we have 
		\begin{equation*}
			\|f^{\alpha}_{\Delta,L}-f\|_{\infty} \leq \frac{\|\alpha\|_{\infty}}{1-\|\alpha\|_{\infty}}\big(\|f\|_{\infty}+\|L(f)\|_{\infty}\big).
		\end{equation*}
		Thus,
		\begin{equation*}
			\| \mathcal{F}^{\alpha}_{\Delta,L}(f)   \|_\infty= \|f^{\alpha}_{\Delta,L}\|_{\infty} \leq \frac{1}{1-\|\alpha\|_{\infty}}\|f\|_{\infty}+\frac{\|\alpha\|_{\infty}}{1-\|\alpha\|_{\infty}}\|L(f)\|_{\infty},
		\end{equation*}
		completing the proof.
	\end{proof}
	As a consequence of the previous proposition, we have the following result that states that the nonlinear  fractal operator  $\mathcal{F}^{\alpha}_{\Delta,L}$ shares some basic boundedness properties of the operator $L$.

	\begin{corollary}\label{cor3.3}
		Consider the map $L:\mathcal{C}(\prod_{k=1}^{n}I_k) \to \mathcal{C}(\prod_{k=1}^{n}I_k)$  and the corresponding (multivariate) fractal operator $\mathcal{F}_{\Delta, L}^\alpha$.
		\begin{enumerate}
			
			\item If $L$ is topologically bounded (that is, $L$ maps bounded sets into bounded sets), then  $\mathcal{F}_{\Delta, L}^\alpha$ is also topologically bounded. 
			
			\item If $L$ is norm-bounded, that is, 
			$$\rho(L):= \max \Big\{ \sup_{f \neq 0} \frac{\|L(f)\|_\infty}{\|f\|_\infty},  \|L(0)\|_\infty\Big\} < \infty,$$ then  $\mathcal{F}_{\Delta, L}^\alpha$ is also norm-bounded. 
			\item If $L$ is quasibounded, that is, 
			$$[L]_Q := \limsup_{\|f\|\to \infty} \frac{\|L(f)\|_\infty}{\|f\|_\infty}< \infty,$$
			then $\mathcal{F}_{\Delta, L}^\alpha$ is quasibounded as well. 
		\end{enumerate} 
	\end{corollary}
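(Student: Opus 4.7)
The plan is to derive each of the three conclusions directly from the boundedness estimate established in the preceding proposition, namely
\begin{equation*}
\|\mathcal{F}^{\alpha}_{\Delta,L}(f)\|_\infty \leq \frac{1}{1-\|\alpha\|_{\infty}}\|f\|_{\infty}+\frac{\|\alpha\|_{\infty}}{1-\|\alpha\|_{\infty}}\|L(f)\|_{\infty},
\end{equation*}
which I shall refer to as $(\star)$. This single inequality does essentially all the work; each of the three boundedness hypotheses on $L$ is transported to $\mathcal{F}_{\Delta,L}^\alpha$ through $(\star)$ by an appropriate specialization.

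For part (1), I would fix a bounded set $B \subset \mathcal{C}(\prod_{k=1}^{n}I_k)$, observe that $L(B)$ is bounded by hypothesis, and read off from $(\star)$ that $\mathcal{F}_{\Delta,L}^\alpha(B)$ is bounded as well. For part (2), I would divide $(\star)$ by $\|f\|_\infty$ for $f \neq 0$ and take the supremum, obtaining
\begin{equation*}
\sup_{f\neq 0}\frac{\|\mathcal{F}^\alpha_{\Delta,L}(f)\|_\infty}{\|f\|_\infty}\leq \frac{1}{1-\|\alpha\|_\infty}+\frac{\|\alpha\|_\infty}{1-\|\alpha\|_\infty}\sup_{f\neq 0}\frac{\|L(f)\|_\infty}{\|f\|_\infty};
\end{equation*}
separately, feeding $f=0$ into $(\star)$ gives $\|\mathcal{F}_{\Delta,L}^\alpha(0)\|_\infty \leq \tfrac{\|\alpha\|_\infty}{1-\|\alpha\|_\infty}\|L(0)\|_\infty$, and combining the two bounds yields $\rho(\mathcal{F}_{\Delta,L}^\alpha) \leq \tfrac{1}{1-\|\alpha\|_\infty}+\tfrac{\|\alpha\|_\infty}{1-\|\alpha\|_\infty}\rho(L) < \infty$. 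For part (3), I would again divide $(\star)$ by $\|f\|_\infty$ and take $\limsup$ as $\|f\|_\infty \to \infty$, yielding
\begin{equation*}
[\mathcal{F}_{\Delta,L}^\alpha]_Q \leq \frac{1}{1-\|\alpha\|_\infty}+\frac{\|\alpha\|_\infty}{1-\|\alpha\|_\infty}[L]_Q < \infty.
\end{equation*}

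I do not anticipate any genuine obstacle here, since the preceding proposition has already done the substantive work of relating $\|\mathcal{F}_{\Delta,L}^\alpha(f)\|_\infty$ to $\|f\|_\infty$ and $\|L(f)\|_\infty$. The only small care needed is in part (2), where $\rho(L)$ is defined as a maximum involving the supremum over $f\neq 0$ together with the value $\|L(0)\|_\infty$; handling $f=0$ separately through $(\star)$ addresses this second piece cleanly.
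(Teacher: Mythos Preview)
Your proposal is correct and matches the paper's approach: the paper states the result as an immediate corollary of the preceding proposition (the inequality you call $(\star)$) without supplying any further details, so your argument simply fills in the routine verifications that the paper leaves implicit.
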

	\begin{proposition}\label{resLip}
		The fractal operator $\mathcal{F}^{\alpha}_{\Delta,L}:\mathcal{C}(\prod_{k=1}^{n}I_k) \to \mathcal{C}(\prod_{k=1}^{n}I_k)$ is relatively Lipschitz  with respect to $L$ and its  $L$-Lipschitz constant  is less than or equal to  $\dfrac{\|\alpha\|_{\infty}}{1-\|\alpha\|_{\infty}}.$ In particular, 	if $L:\mathcal{C}(\prod_{k=1}^{n}I_k) \to \mathcal{C}(\prod_{k=1}^{n}I_k)$ is a Lipschitz operator, then so is the fractal operator $\mathcal{F}^{\alpha}_{\Delta,L}$ with its Lipschitz constant $|\mathcal{F}^{\alpha}_{\Delta,L}| \leq \frac{1+\|\alpha\|_{\infty}|L|}{1-\|\alpha\|_{\infty}},$ where 
		$|L|$ is the Lipschitz constant of $L$ defined by $|L|= \sup_{f\neq g} \dfrac{\|L(f)-L(g)\|_\infty}{\|f-g\|_\infty}$. 
	\end{proposition}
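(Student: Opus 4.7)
The plan is to mimic the strategy used for Proposition \ref{error2} (which gave the error bound) but applied to the difference of two fractal operator outputs instead of to the single quantity $\|f_{\Delta,L}^\alpha - f\|_\infty$. Concretely, I would take arbitrary $f,g \in \mathcal{C}(\prod_{k=1}^n I_k)$, write down the self-referential functional equation \eqref{selfrefnewA} for both $f^\alpha_{\Delta,L}$ and $g^\alpha_{\Delta,L}$ (with $b = L(f)$ and $b = L(g)$ respectively), and subtract. On any subrectangle $\prod_{k=1}^n I_{k,i_k}$, this yields
\begin{equation*}
f^\alpha_{\Delta,L}(X) - g^\alpha_{\Delta,L}(X) = \bigl[f(X)-g(X)\bigr] + \alpha(X)\bigl[(f^\alpha_{\Delta,L}-g^\alpha_{\Delta,L})(u^{-1}_{i_1\ldots i_n}(X)) - (L(f)-L(g))(u^{-1}_{i_1\ldots i_n}(X))\bigr].
\end{equation*}

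Next, I would take absolute values, use $|\alpha(X)| \le \|\alpha\|_\infty$, and pass to the supremum over $X \in \prod_{k=1}^n I_k$. Since each $u^{-1}_{i_1\ldots i_n}$ takes values inside the domain, the term $\|(f^\alpha_{\Delta,L}-g^\alpha_{\Delta,L})\circ u^{-1}_{i_1\ldots i_n}\|_\infty$ is bounded by $\|f^\alpha_{\Delta,L}-g^\alpha_{\Delta,L}\|_\infty$, and similarly for the $L(f)-L(g)$ term. Rearranging the resulting inequality produces
\begin{equation*}
\|f^\alpha_{\Delta,L}-g^\alpha_{\Delta,L}\|_\infty \le \frac{1}{1-\|\alpha\|_\infty}\|f-g\|_\infty + \frac{\|\alpha\|_\infty}{1-\|\alpha\|_\infty}\|L(f)-L(g)\|_\infty,
\end{equation*}
which is exactly the relative Lipschitz estimate \eqref{relLip} with $a = \tfrac{1}{1-\|\alpha\|_\infty}$ and $b = \tfrac{\|\alpha\|_\infty}{1-\|\alpha\|_\infty}$, so the $L$-Lipschitz constant does not exceed $\tfrac{\|\alpha\|_\infty}{1-\|\alpha\|_\infty}$.

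For the second (in-particular) clause, I would simply substitute the hypothesis $\|L(f)-L(g)\|_\infty \le |L|\,\|f-g\|_\infty$ into the inequality just derived, collect terms, and read off the bound $|\mathcal{F}^\alpha_{\Delta,L}| \le \tfrac{1+\|\alpha\|_\infty |L|}{1-\|\alpha\|_\infty}$. No genuine obstacle is anticipated: the only subtle point is making sure one may legitimately pass to the supremum \emph{before} the rearrangement, which is justified because $\|\alpha\|_\infty<1$ so the coefficient $1-\|\alpha\|_\infty$ appearing after moving the $\|f^\alpha_{\Delta,L}-g^\alpha_{\Delta,L}\|_\infty$ term to the left-hand side is strictly positive. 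This is the same trick used earlier to derive Proposition \ref{error2} from Proposition \ref{error1}, and so the argument here is essentially a differential (two-function) version of that computation.
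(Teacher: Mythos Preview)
Your proposal is correct and follows essentially the same approach as the paper: subtract the two self-referential equations \eqref{selfrefnewA}, bound pointwise using $\|\alpha\|_\infty$, take suprema, and rearrange (the paper compresses all of this into the phrase ``routine computations''). Your write-up is in fact more detailed than the paper's own proof, and the remark about needing $1-\|\alpha\|_\infty>0$ before dividing is exactly the implicit justification the paper relies on.
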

	\begin{proof}
		Let $f$ and $g$ be in $\mathcal{C}(\prod_{k=1}^{n}I_k)$. Using the functional equations for the fractal functions  $f^{\alpha}_{\Delta,L}$ and $ g^{\alpha}_{\Delta,L}$ \big(see (\ref{selfref})\big), 
		and some routine computations we have
		\begin{equation*}
			\begin{aligned}
				\big\| \mathcal{F}^{\alpha}_{\Delta,L}(f) - \mathcal{F}^{\alpha}_{\Delta,L}(g) \big\|_\infty =&~ \|f^{\alpha}_{\Delta,L} - g^{\alpha}_{\Delta,L} \|_\infty \\
				\le &~ \frac{1}{1-\|\alpha\|_{\infty}}\|f-g\|_{\infty} +\frac{\|\alpha\|_{\infty}}{1-\|\alpha\|_{\infty}} \| L(f)- L(g)\|_\infty,
			\end{aligned}
		\end{equation*}
		proving that $\mathcal{F}^{\alpha}_{\Delta,L}$ is relatively Lipschitz with respect to $L$. In particular, if $L$ is Lipschitz with the Lipschitz constant $|L|$, the previous inequality yields
		
		$$	\| \mathcal{F}^{\alpha}_{\Delta,L}(f) - \mathcal{F}^{\alpha}_{\Delta,L}(g) \|_\infty \le \frac{1}{1-\|\alpha\|_{\infty}}\|f-g\|_{\infty} +\frac{\|\alpha\|_{\infty}}{1-\|\alpha\|_{\infty}} |L|\| f- g\|_\infty,$$ proving that $\mathcal{F}^{\alpha}_{\Delta,L}$ is a Lipschitz operator. 
	\end{proof}

	\begin{lemma}(\cite{RW})
		Let $X$ be a Banach space, $A: X \to X$ be a Lipschitz operator and $|Id-A|<1$, where $Id$ is the identity operator on $X$.  Then $A^{-1}: X \to X $  is Lipschitz and 
		$|A^{-1}| \le \dfrac{1}{1- |Id-A|}.$
	\end{lemma}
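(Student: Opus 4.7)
The plan is to mimic the classical Neumann-series argument, but replace the geometric-series convergence (which requires linearity) with the Banach fixed point theorem (which only needs contraction). Set $B := Id - A$, so that $|B| = |Id - A| < 1$. To show $A$ is bijective, fix an arbitrary $y \in X$ and reformulate the equation $A(x) = y$ as the fixed-point equation $x = y + B(x)$. Define $T_y : X \to X$ by $T_y(x) = y + B(x)$; then for any $x_1, x_2 \in X$,
\begin{equation*}
\|T_y(x_1) - T_y(x_2)\| = \|B(x_1) - B(x_2)\| \le |Id - A|\, \|x_1 - x_2\|,
\end{equation*}
so $T_y$ is a strict contraction on the Banach space $X$. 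By the Banach fixed point theorem, $T_y$ has a unique fixed point, which I would declare to be $A^{-1}(y)$. Existence of this fixed point for every $y$ gives surjectivity of $A$, and its uniqueness gives injectivity; hence $A^{-1} : X \to X$ is well-defined.

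For the Lipschitz estimate, I would pick $y_1, y_2 \in X$ and set $x_i := A^{-1}(y_i)$. By construction $x_i = y_i + (Id - A)(x_i)$ for $i = 1,2$, so subtracting and using the triangle inequality together with the $|Id-A|$-Lipschitz bound on $Id - A$ yields
\begin{equation*}
\|x_1 - x_2\| \le \|y_1 - y_2\| + |Id - A|\, \|x_1 - x_2\|.
\end{equation*}
Since $|Id - A| < 1$, rearranging gives $\|A^{-1}(y_1) - A^{-1}(y_2)\| \le \frac{1}{1 - |Id - A|} \|y_1 - y_2\|$. Taking the supremum over $y_1 \ne y_2$ produces the asserted bound $|A^{-1}| \le \frac{1}{1 - |Id - A|}$.

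There is really no substantive obstacle: this is the nonlinear-Lipschitz analogue of a standard fact for bounded linear operators close to the identity. The only conceptual point worth flagging is that the same subtraction identity simultaneously delivers both surjectivity (via the contraction $T_y$) and the quantitative Lipschitz bound, so no separate argument is required for the two conclusions.
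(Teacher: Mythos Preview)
Your argument is correct and complete: the contraction $T_y(x)=y+(Id-A)(x)$ gives existence and uniqueness of a preimage for every $y$, and the same fixed-point identity rearranged yields the Lipschitz bound $|A^{-1}|\le (1-|Id-A|)^{-1}$.

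Note, however, that the paper does not supply its own proof of this lemma; it is quoted from \cite{RW} and used as a black box to establish the subsequent proposition on $\mathcal{F}^{\alpha}_{\Delta,L}$. So there is nothing to compare against in the paper itself. Your proof is the standard one for this nonlinear analogue of the Neumann-series invertibility criterion and would serve perfectly well as the omitted justification.
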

	\begin{proposition}
		Assume that $L: \mathcal{C}(\prod_{k=1}^{n}I_k) \to \mathcal{C}(\prod_{k=1}^{n}I_k)$ is a Lipschitz operator and the scaling function $\alpha$ is such that $\| \alpha\|_\infty < \big(  2+ |L| \big)^{-1}$. Then the fractal operator $\mathcal{F}^{\alpha}_{\Delta,L}:\mathcal{C}(\prod_{k=1}^{n}I_k) \to \mathcal{C}(\prod_{k=1}^{n}I_k)$ is a Lipschitz isomorphism (surjective bilipschitz map), that is, 
		$\mathcal{F}^{\alpha}_{\Delta,L}$ is a bijective Lipschitz operator and its inverse $(\mathcal{F}^{\alpha}_{\Delta,L})^{-1}:\mathcal{C}(\prod_{k=1}^{n}I_k) \to \mathcal{C}(\prod_{k=1}^{n}I_k)$ is also Lipschitz. 
	\end{proposition}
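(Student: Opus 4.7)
The plan is to deduce the result from the preceding Rus--Wang lemma, which provides bi-Lipschitz invertibility of any Lipschitz self-map $A$ on a Banach space as soon as $|Id - A| < 1$. So the whole task reduces to two things: (i) check that $\mathcal{F}^{\alpha}_{\Delta,L}$ is itself Lipschitz, and (ii) show that the hypothesis $\|\alpha\|_{\infty} < (2+|L|)^{-1}$ is precisely what is needed to force $\bigl|Id - \mathcal{F}^{\alpha}_{\Delta,L}\bigr| < 1$.

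Ingredient (i) is essentially free: since $L$ is Lipschitz, Proposition \ref{resLip} already gives that $\mathcal{F}^{\alpha}_{\Delta,L}$ is Lipschitz with constant at most $\dfrac{1+\|\alpha\|_{\infty}|L|}{1-\|\alpha\|_{\infty}}$. For ingredient (ii), I would write the self-referential equation (\ref{selfref}) for two germs $f,g$ using $b=L(f)$ and $b=L(g)$ respectively and subtract them; for $X\in\prod_{k=1}^n I_{k,i_k}$ this yields
\begin{equation*}
(f^{\alpha}_{\Delta,L}-f)(X)-(g^{\alpha}_{\Delta,L}-g)(X)=\alpha(X)\Big[(f^{\alpha}_{\Delta,L}-g^{\alpha}_{\Delta,L})-(L(f)-L(g))\Big]\bigl(u^{-1}_{i_1\ldots i_n}(X)\bigr).
\end{equation*}
Taking the uniform norm, applying the Lipschitz estimate $\|L(f)-L(g)\|_{\infty}\le |L|\,\|f-g\|_{\infty}$, and invoking the bound from (i) to control $\|f^{\alpha}_{\Delta,L}-g^{\alpha}_{\Delta,L}\|_{\infty}$, a short algebraic simplification telescopes the right-hand side to
\begin{equation*}
\bigl|Id-\mathcal{F}^{\alpha}_{\Delta,L}\bigr|\le \frac{\|\alpha\|_{\infty}(1+|L|)}{1-\|\alpha\|_{\infty}}.
\end{equation*}
This bound is strictly less than $1$ iff $\|\alpha\|_{\infty}(2+|L|)<1$, which is exactly the hypothesis. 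The Rus--Wang lemma then produces a Lipschitz inverse of $\mathcal{F}^{\alpha}_{\Delta,L}$ (in particular $\mathcal{F}^{\alpha}_{\Delta,L}$ is a bijection), giving the claimed Lipschitz isomorphism.

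I do not foresee any real obstacle; the argument is entirely mechanical once the right identity for the difference $(Id-\mathcal{F}^{\alpha}_{\Delta,L})(f)-(Id-\mathcal{F}^{\alpha}_{\Delta,L})(g)$ is in hand. The one subtlety worth flagging explicitly is that the piecewise identity above holds on each subrectangle $\prod_{k=1}^n I_{k,i_k}$ separately, and one must note that the continuity of $f^{\alpha}_{\Delta,L}$ and $g^{\alpha}_{\Delta,L}$ guaranteed by Theorem \ref{FIF} allows the resulting piecewise bound to be upgraded to a uniform estimate over all of $\prod_{k=1}^n I_k$, after which the bound on $|Id-\mathcal{F}^{\alpha}_{\Delta,L}|$ and the application of the lemma proceed cleanly.
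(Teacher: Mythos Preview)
Your proposal is correct and follows essentially the same route as the paper's proof: you derive the identity for $(Id-\mathcal{F}^{\alpha}_{\Delta,L})(f)-(Id-\mathcal{F}^{\alpha}_{\Delta,L})(g)$ from the self-referential equation, bound it using the Lipschitz estimate for $L$ together with the Lipschitz bound for $\mathcal{F}^{\alpha}_{\Delta,L}$ from Proposition~\ref{resLip}, arrive at $|Id-\mathcal{F}^{\alpha}_{\Delta,L}|\le \frac{\|\alpha\|_\infty(1+|L|)}{1-\|\alpha\|_\infty}$, and then invoke the preceding lemma. The only cosmetic discrepancy is that the lemma you call ``Rus--Wang'' is cited in the paper as Riedl--Webb \cite{RW}.
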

	\begin{proof}
		Let $f,g \in \mathcal{C}(I).$ Using the functional equations (see (\ref{selfref})) for the self-referential counterparts to $f$ and $g$,  we obtain
		\begin{equation*}
			\begin{aligned}
				\|(Id - \mathcal{F}^{\alpha}_{\Delta,L})(f) - (Id - \mathcal{F}^{\alpha}_{\Delta,L})(g)\|_{\infty} \leq \|\alpha\|_{\infty}\big(\|L(f) - L(g)\|_{\infty} + \|f^{\alpha}_{\Delta,L} - g^{\alpha}_{\Delta,L}\|_{\infty}\big).
			\end{aligned}
		\end{equation*}	
		Therefore, via  Proposition \ref{resLip}
		\begin{equation*}
			\begin{aligned}
				|Id - \mathcal{F}^{\alpha}_{\Delta,L}| \leq &~  \|\alpha\|_{\infty}\big(|L| +|\mathcal{F}^{\alpha}_{\Delta,L}|\big)\\
				\leq &~ \|\alpha\|_{\infty}\Big(|L| + \frac{1+\|\alpha\|_{\infty}|L|}{1-\|\alpha\|_{\infty}}\Big),\\
				=&~  \frac{\|\alpha\|_{\infty}\big(1+|L|\big)}{1-\|\alpha\|_{\infty}}.
			\end{aligned}
		\end{equation*}
		The assertion is now immediate by the previous lemma. 
	\end{proof}

	\bibliographystyle{amsplain}

\end{document}